\tikzstyle arrowstyle=[scale=1]
\tikzstyle directed=[postaction={decorate,decoration={markings,
    mark=at position .5 with {\arrow[arrowstyle]{stealth}}}}]
\definecolor{azure}{rgb}{0.,0.5,1.0} 
\definecolor{green}{rgb}{0.,0.6,0.} 
\newcommand\ylw{\Yfillcolour{yellow}}
\def\rouge{\textcolor{red}}
\def\bleu{\textcolor{blue}}
\def\auteur#1{{\sc #1}}
\def\titreref#1{{\em #1}}
\def\vol#1{{\bf #1}}
\newtheorem{conjecture}{\bleu{Conjecture}}
\newtheorem{property}{\bleu{Property}}
\newtheorem{properties}{\bleu{Properties}}
\newtheorem{proposition}{\bleu{Proposition}}
\numberwithin{equation}{section}
\numberwithin{lemma}{section}
\numberwithin{proposition}{section}
\numberwithin{remark}{section}
\numberwithin{property}{section}
\numberwithin{properties}{section}
\numberwithin{cor}{section}
\newenvironment{formula}{\begin{equation}}{\end{equation}}
\newcommand{\define}[1]{\bleu{\bf{#1}}}
\newcommand{\A}{\mathcal{A}}
\newcommand{\area}{\mathrm{area}}
\newcommand{\BarE}{\overline{\mathcal{E}}}
\newcommand{\E}{\mathcal{E}}
\newcommand{\F}{\mathcal{F}}
\newcommand\GL{\mathrm{GL}}
\newcommand{\length}{\boldsymbol{\ell}}
\newcommand{\maxmult}{\boldsymbol{m}}
\renewcommand\S{\mathbb{S}}
\newcommand\SchurHat{{\widehat{s}}}
\title{Triangular Diagonal Harmonics Conjectures.
}
\author{F.~Bergeron}
\address{\href{http://bergeron.math.uqam.ca}{D\'epartement de Math\'ematiques, Lacim, UQAM.}}
  \email{\href{mailto:bergeron.francois@uqam.ca}{bergeron.francois@uqam.ca}}
  \date{\bleu{\bf \today}. This work was supported by NSERC}
\begin{document}


\begin{abstract} The purpose of this paper is mostly to present conjectures that extend, to the ``triangular partition'' context (partitions ``under any line'' in the terminology of \cite{AnyLine}), properties of Frobenius of multivariate diagonal harmonics modules.
\end{abstract}

\maketitle
 \parskip=0pt
{ \setcounter{tocdepth}{2}\parskip=0pt\footnotesize \tableofcontents}
\parskip=8pt  
\parindent=20pt

\section{\bleu{Introduction}}
Triangular partitions and their properties are described and discussed in~\cite{Bergeron_Mazin}. 
The triangular partitions of size at most $5$ (and respective Ferrers diagrams in French notation) are: 
\ylw
\Ylinethick{.01cm}
 \Yboxdim{.2cm}
\begin{align*}
&0;&&\emptyset\\
&1;&& \yng(1);\\
&11,2;&&\yng(1,1),\ \yng(2);\\ 
&111,21,3;&&\yng(1,1,1),\ \yng(2,1),\ \yng(3);\\ 
&1111,211,31,4;&&\yng(1,1,1,1),\ \yng(2,1,1),\ \yng(3,1),\ \yng(4);\\ 
&11111,2111,221,32,41, 5;&&\yng(1,1,1,1,1),\ \yng(2,1,1,1),\ \yng(2,2,1),\ \yng(3,2),\ \yng(4,1),\ \yng(5).\\ 
\end{align*}
Classical cases include the \define{staircases} shapes $\delta(n)=(n-1,n-2,\ \ldots\ ,2,1)$:
\Ylinethick{.01cm}
 \Yboxdim{.3cm}
 \Yvcentermath1
$$\delta(1)=\hbox{$\emptyset$},\quad\delta(2)=\hbox{ \yng(1)},\quad \delta(3)=\hbox{ \yng(2,1)},\quad \delta(4)=\hbox{\yng(3,2,1)},\quad \delta(5)=\hbox{\yng(4,3,2,1)}, \quad \cdots $$
as well as the general staircases $((n-1)k,(n-2)k,\ \ldots\ , 2k,k)$, and their conjugate.
As a matter of fact, the set of triangular partitions is closed under conjugation. We respectively denote by $\length(\tau)$ and $\maxmult(\tau)$, the length of $\tau$ (number of non-zero parts), and the maximal multiplicity of a part in $\tau$. Thus, for the length $6$ partition $\tau=222211$, the value of $\maxmult(\tau)$ is $4$.
Observe that a length $2$ partition $\tau=ab$ is triangular, if and only if $2b\leq a+1$. 
 \Yvcentermath0

For $\tau$ a given triangular partition, and any positive integer $n$ larger or equal to $ \length(\tau)+\maxmult(\tau)$, we consider expressions
\begin{equation}\label{E_tau_qx}
     \E_{\tau}^{(n)}(\boldsymbol{q};\boldsymbol{x})=\sum_{\mu\vdash n} \sum_{\lambda} a_{\lambda\mu}\, s_\lambda(\boldsymbol{q}) s_\mu(\boldsymbol{x}),
 \end{equation}
where the coefficients $a_{\lambda\mu}$ are positive integers,  $\boldsymbol{q}$ stands for an ``alphabet'' of $k$ variables $(q_1,q_2,\ldots,q_k)$, and $\boldsymbol{x}=(x_1,x_2,x_3,\,\ldots\,)$. The $ \E_{\tau}^{(n)}$ are intended to be many variables extensions of the two variable context  $\E_{\tau}^{(n)}(q,t;\boldsymbol{x})$ (hence $q_1=q$ and $q_2=t$) described in see~\cite{AnyLine}. For any given $\tau$, these two variable instances may be calculated either using operators of Negut in the Schimann-Burban elliptic Hall algebra, or via a combinatorial formula expressed in terms of partitions contained in $\tau$ and involving ``area'' and ``dinv'' parameters, as well as LLT polynomials. See {\sl loc. cit.} for details, as well as \cite{MR4238181, MR3787405, MR4054520, MR4348234} for more context on the whole subject.  The $\E_{\tau}^{(n)}(q,t;\boldsymbol{x})$ are symmetric in $q$ and $t$, and ``Schur positive'' both as functions of $q$ and $t$, and the $\boldsymbol{x}$-variables. Hence, \autoref{E_tau_qx} makes sense. We recall that
\begin{equation}\label{nabla_prop}
	\nabla(\E_{\tau}^{(n)}(q,t;\boldsymbol{x})) = \E_{\tau+\delta(n)}^{(n)}(q,t;\boldsymbol{x}),
\end{equation}
where addition of partitions is part-wise. Here, $\nabla$ is the Macdonald eigenoperator with respective eigenvaluer equal to $\prod_{(i,j)\in \mu} q^it^j$ for the eigenfunctionr $\widetilde{H}_\mu(q,t;\boldsymbol{x})$. These are the modified Macdonald polynomials  (see~\cite{MR3682396}). In the sequel, we will describe further properties of the $\E_{\tau}^{(n)}$ in relation with the more general Macdonald eigenoperators $\Delta_f$ (see~\cite{MR1803316, MR3811519}), whose eigenvalues are $f(\sum_{(i,j)\in \mu} q^it^j)$. In plethystic notation, this means that the monomials $q^it^j$ are substituted for the variables in the symmetric function $f(z_1,z_2,\ldots, z_r)$, assuming that $r$ is equal to the number of cells $(i,j)$ of $\mu$.
   
Since in many instances the variable alphabets $\boldsymbol{q}$ and $\boldsymbol{x}$ are not actually used, it is handy to present $\E_{\tau}^{(n)}(\boldsymbol{q};\boldsymbol{x})$ as an abstract tensorial\footnote{We here exploit the non-commutativity of tensors, as well as their bilinearity.} expression, in which variables are removed (including notation-wise):
 \begin{equation}\label{defn_E_tau}
     \E_{\tau}^{(n)}:=\sum_{\mu\vdash n} \sum_{\lambda} a_{\lambda\mu}\, s_\lambda \otimes s_\mu. 
 \end{equation}
On such expressions we consider the Hall scalar product (corresponding to the $\boldsymbol{x}$-variables component) such that $\langle f\otimes s_\mu,s_\mu\rangle = f$, and $\langle f\otimes s_\mu,s_\nu\rangle = 0$ if $\nu\neq \mu$. In particular, $\langle \E_{\tau}^{(n)},s_\mu\rangle$ picks up the coefficient of $s_\mu$ in $\E_{\tau}^{(n)}$:
\begin{equation}
     \langle \E_{\tau}^{(n)},s_{\mu}\rangle = \sum_{\lambda} a_{\lambda\mu}\, s_\lambda.
\end{equation}
As we will see, some of these coefficients of $\E_{\tau}^{(n)}$ may depend on $n$, but they all ``stabilize'' when $n$ is large enough. To better express this, we consider 
$$\overline{\E}_{\tau}^{(n)}:=\sum_{\mu\vdash n} \sum_{\lambda} a_{\lambda\mu}\, 
	s_\lambda \otimes s_{\rouge{\overline{\mu}}},$$
where $ \overline{\mu}$ is the partition obtained by removing from $\mu$ its first column. For sure, we may get back $\E_{\tau}^{(n)}$ by adding a column to all partitions involved, so that the are turned back into partitions of $n$.
When $n$ becomes larger than some value $n_0$, there is a \define{stable} $\overline{\E}_{\tau}$ equal to $\overline{\E}_{\tau}^{(n)}$.
For the special case $\mu=1^n$, we set $\A_\tau^{(n)}:= \langle \E_{\tau}^{(n)}, s_{1^n}\rangle$.
As a matter of fact, $\A_{\tau}^{(n)}$ is independent of $n$, so that we will denote it more simply by $\A_\tau$. 

In all that follows, when applied to a tensorial expression, the linear operator $e_k^\perp$ acts on first components of tensors, meaning that
    $$ e_k^\perp (f\otimes g) := (e_k^\perp f)\otimes g.$$
On symmetric functions, $e_k^\perp$ is dual to multiplication by the elementary symmetric function $e_k$ for the Hall scalar product. In particular, its effect on a Schur function $s_\mu$ is:
    $$ e_k^\perp s_\mu = \sum_{\nu\subset_k \mu} s_\nu,$$
 where the sum runs over all partitions $\nu$ that can obtained by removing $k$ cells from $\mu$, no two of which lying in the same row.

It will be useful to consider that the \define{length} $\length(f)$ of a symmetric function $f$ is the maximum number of parts occurring in partitions $\lambda$ such that $\langle f,s_\lambda\rangle\neq 0$. In formula: 
    $$\length(f) :=\max_{\langle f,s_\lambda\rangle\not=0} \length(\lambda).$$
We extend this to tensors, setting $\length(f\otimes g):=\length(f)$.
As a Schur function vanishes whenever its number of variables is lower than the length of the indexing partition; fixing $k$, the number of variables $q_i$, corresponds to restrictions on lengths for left-hand components. We thus write
 \begin{equation}\label{defn_E_tau_restrict}
     \E_{\tau}^{(n)}\big|_{\rouge{\leq j}}:=\sum_{\mu\vdash n}\Big( \sum_{\length(\lambda)\rouge{\leq j}} a_{\lambda\mu}\, s_\lambda\Big) \otimes s_\mu,
 \end{equation}
 for the \define{restriction} of $\E_{\tau}^{(n)}$ to its terms of length at most $j$, in other terms all those for which $\length(s_\lambda \otimes s_\mu)\leq j$. In particular, $\E_{\tau}^{(n)}\big|_{\leq 2}$ bijectively encodes $\E_{\tau}^{(n)}(q,t;\boldsymbol{x})$. 
It may be shown that $\E_{\tau}^{(n)}\big|_{\leq 1}$   is entirely characterized  (up to a power of $q$) by a Whittaker\footnote{The  Whittaker polynomial $W_\mu(q;\boldsymbol{x})$ occurs as the highest $t$-degree component of the modified Macdonald polynomials $\widetilde{H}_\mu(q,t;\boldsymbol{x})$.} polynomial. Indeed, we have the following. 
  \begin{proposition}\label{q_specialization} For any triangular partition $\tau$ and $n\geq \length(\tau)+\maxmult(\tau)$, 
     \begin{equation}
         \E_{\tau}^{(n)}(q;\boldsymbol{x})= q^{|\tau|-\eta(\mu)} W_{\mu}(q;\boldsymbol{x}),
      \end{equation}
 where $\mu=\mu(\tau,n)$ is the conjugate of the partition $(n-\length(\tau),\rho_1,\ldots,\rho_k)$, in which the $\rho_i$'s are the multiplicities (reordered decreasingly) of parts occurring in $\tau$. Furthermore, $\eta(\mu)$ stands for $\sum_{(i,j)\in \mu}j$.
  \end{proposition}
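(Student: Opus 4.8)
The plan is to identify $\E_\tau^{(n)}(q;\boldsymbol{x})$ with the length-$\leq 1$ restriction $\E_\tau^{(n)}\big|_{\leq 1}$, which records exactly the terms $s_\lambda\otimes s_\mu$ with $\length(\lambda)=1$, i.e.\ those in which $s_\lambda(\boldsymbol{q})=s_{(d)}(q)=q^d/\cdots$ collapses to a single power of $q$ upon setting $k=1$. Thus the coefficient of each $s_\mu(\boldsymbol{x})$ in $\E_\tau^{(n)}(q;\boldsymbol{x})$ is the generating polynomial $\sum_d a_{(d),\mu}\,q^d$, and the claim is that only one partition $\mu$ survives, namely the conjugate of $(n-\length(\tau),\rho_1,\ldots,\rho_k)$, with a shifted power of $q$ as coefficient. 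I would first establish this single-$\mu$ statement and then pin down the exponent.

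First I would invoke the two-variable specialization from \cite{AnyLine}: since $\E_\tau^{(n)}\big|_{\leq 2}$ encodes $\E_\tau^{(n)}(q,t;\boldsymbol{x})$ and the latter relates to $\nabla$ and $\Delta_f$-images of elementary symmetric functions via the Negut-operator / combinatorial description, the length-$\leq 1$ part is obtained as the highest (or lowest) $t$-degree component. By the footnoted fact that the Whittaker polynomial $W_\mu(q;\boldsymbol{x})$ is the top $t$-degree component of $\widetilde{H}_\mu(q,t;\boldsymbol{x})$, extracting the extremal-$t$ part of the combinatorial formula for $\E_\tau^{(n)}(q,t;\boldsymbol{x})$ should collapse the LLT/area/dinv sum onto a single modified Macdonald polynomial's top component, explaining why exactly one $\mu$ appears. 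The combinatorial identification of which $\mu$ occurs is where the multiplicities $\rho_i$ enter: the partitions contained in $\tau$ that contribute to the extremal $t$-degree are governed by the column structure of $\tau$, and reorganizing them by the multiplicity profile $(\rho_1,\ldots,\rho_k)$ together with the ``padding'' part $n-\length(\tau)$ (accounting for the stabilization as $n$ grows) yields precisely the conjugate partition claimed.

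To fix the exponent $q^{|\tau|-\eta(\mu)}$, I would compute the $q$-degree shift by comparing the top $t$-degree statistic of $\widetilde{H}_\mu$ with the area normalization built into the definition of $\E_\tau^{(n)}(q,t;\boldsymbol{x})$. The quantity $\eta(\mu)=\sum_{(i,j)\in\mu}j$ is exactly the co-arm (or leg) sum that records the $t$-content of the cells of $\mu$, so the difference between the total degree $|\tau|$ carried by $\E_\tau^{(n)}$ and this content sum should match the normalizing power of $q$ up to which $W_\mu$ is determined. This is a bookkeeping computation, tracking how the $q^{\area}t^{\mathrm{dinv}}$ weights specialize when the $t$-degree is maximized.

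The main obstacle will be the second step: rigorously showing that the extremal-$t$-degree part of the (LLT-based) combinatorial formula collapses to a \emph{single} $\widetilde{H}_\mu$'s top component and correctly identifying the combinatorics of $\tau$ that produce the specific conjugate partition $\mu(\tau,n)$. This requires a careful analysis of which sub-partitions of $\tau$ realize the maximal $t$-degree and how their contributions organize by multiplicity; the power-of-$q$ normalization, by contrast, is routine once the $\mu$ is known. An alternative, cleaner route would bypass the combinatorics by characterizing $\E_\tau^{(n)}\big|_{\leq 1}$ spectrally through the $\Delta_f$ / $\nabla$ eigenoperator action at the extreme $t$-degree, using the recursion \autoref{nabla_prop} to reduce to a base case and then verifying the Whittaker identity by an eigenvalue computation; I would attempt this spectral approach first, as it is likely to make the single-$\mu$ phenomenon transparent.
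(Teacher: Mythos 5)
You should first be aware that the paper itself offers no proof of \autoref{q_specialization}: it is asserted with the phrase ``it may be shown,'' and the only argument actually written out in the paper is for the two-row case (\autoref{Prop_cas_n3}). So your proposal must stand on its own, and as written it does not: the entire content of the proposition is the claim that the $t=0$ specialization of the combinatorial/Negut sum collapses onto a \emph{single} $\mu$, namely the conjugate of $(n-\length(\tau),\rho_1,\ldots,\rho_k)$, with the stated power of $q$ --- and this is exactly the step you defer (``requires a careful analysis of which sub-partitions of $\tau$ realize the maximal $t$-degree''). Your first two paragraphs restate the proposition in combinatorial language rather than prove it. There is also a conflation of two different extremal operations that sits at the crux: restriction to length $\leq 1$ corresponds to setting $t=0$, i.e.\ to the \emph{bottom} $t$-degree of each $s_\lambda(q,t)$, whereas $W_\mu(q;\boldsymbol{x})$ is by definition the \emph{top} $t$-degree component of $\widetilde{H}_\mu(q,t;\boldsymbol{x})$; the proposition asserts a nontrivial match between the bottom-$t$ part of the full sum and the top-$t$ part of a single Macdonald polynomial, so writing ``the highest (or lowest) $t$-degree component'' glosses over precisely what must be established. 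Note also that in the expansion $\E_{\tau}^{(n)}(q,t;\boldsymbol{x})=\sum_\mu c_\mu(q,t)\,\widetilde{H}_\mu(q,t;\boldsymbol{x})$ coming from the Negut formula, the coefficients $c_\mu(q,t)$ are rational functions of $q$ and $t$, so ``setting $t=0$'' is not a term-by-term operation on this expansion; you would have to control limits and cancellations, which is again the hard part.

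Your preferred fallback, the ``spectral'' route via \autoref{nabla_prop}, fails for general $\tau$: the recursion only relates $\tau$ to $\tau+\delta(n)$, so for fixed $n$ it moves along one-parameter chains, and most triangular partitions are not in the forward orbit of a tractable base case (for instance $\tau=(4,2)$ with $n=4$ has $\tau-\delta(4)$ not even a partition; this is why the paper's inductive proof works only in the two-row, $n=3$ setting, where subtracting $\delta(3)=(2,1)$ does exhaust the family). Moreover, $\nabla$ does not interact simply with the $t=0$ specialization: its eigenvalues $\prod_{(i,j)\in\mu}q^it^j$ carry strictly positive powers of $t$ for every $\mu$ with more than one row, so an ``eigenvalue computation at the extreme $t$-degree'' would require tracking how the extremal $t$-degrees of the rational coefficients $c_\mu(q,t)$ shift under $\nabla$ --- which is the same unresolved combinatorial problem in different clothing. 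In short: the skeleton (reduce to $\E_{\tau}^{(n)}\big|_{\leq 1}$, identify it with a $t$-extremal component, then fix the $q$-power) is reasonable and consistent with what the paper hints at, but both of your proposed ways of filling in the skeleton have a genuine gap at the single-$\mu$ identification, and the exponent bookkeeping you call ``routine'' cannot even begin until that gap is closed.
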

To see how this describes the length-one component of $\E_{\tau}^{(n)}$, consider the case $\tau=321$, and $n=6$, for which we get
\begin{tcolorbox}$\begin{aligned}
 	&W_{41}(q,\boldsymbol{x})= s_{411} 
		+ (q+q^{2} ) s_{321} 
		+ q^{3} s_{222} 
		+ ( q+ q^{2} + q^{3}) s_{3111}\\
	  &\qquad \qquad \qquad
	  	+ (q^{2}+ q^{3} + q^{4} ) s_{2211} 
	  	+ (q^{3}+ q^{4} + q^{5} ) s_{21111} 
		+ q^{6} s_{111111},\qquad {\rm and}\\
 	&\E_{321}^{(6)}\big|_{\leq 1}=1 \otimes s_{411}
		+ (s_{1} + s_{2}) \otimes s_{321}  
		+ s_{3} \otimes s_{222}   
		+ (s_{1}  + s_{2}  + s_{3}) \otimes s_{3111}\\
	 &\qquad \qquad \qquad
		+ (s_{2}   + s_{3} + s_{4} )\otimes s_{2211} 
		+ (s_{3}   + s_{4}  + s_{5}) \otimes s_{21111} 
		+ s_{6} \otimes s_{111111}.
\end{aligned}$\end{tcolorbox}
In view of the special role of the restriction to ``two parameters'', we set:
  \begin{equation}\label{defn_epsilon}
     \varepsilon_{\tau}^{(n)}:=\E_{\tau}^{(n)}\big|_{\leq 2},
 \end{equation}
Recall that the $\varepsilon_{\tau}^{(n)}$ may be explicitly calculated, either combinatorially or using operators on symmetric functions, as described in \cite{AnyLine}. The crucial role of \autoref{nabla_prop} is made more apparent if we write all $\nabla(f)$ in tensor form. 
For sure, in all cases, specialization at $t=0$ is given by \autoref{q_specialization}.
Our main objective is to describe ``good'' many valued extensions $\E_{\tau}^{(n)}$ of the $\varepsilon_{\tau}^{(n)}$.
The precise meaning of ``good'' will be discussed in the sequel. It relies on a set of desired properties of the $\E_{\tau}^{(n)}$.

\section{\bleu{Some explicit values for \texorpdfstring{$\A_\tau$}{AT} and \texorpdfstring{$\E_\tau$}{ET}}}
For the sequel, we assume that the following holds.
 \begin{conjecture}[Coefficient-Length]\label{length_conjecture} For all triangular partitions $\tau$, we have 
 \begin{equation}
 \length(\E_{\tau}^{(n)})\leq \min(\length(\tau),\length(\tau')),
 \end{equation}
and $\A_{\tau}=\langle \E_{\tau}^{(n)},e_n\rangle$ does not depend on $n$. 
\end{conjecture}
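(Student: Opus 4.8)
The plan is to treat the two assertions separately and, for the length bound, to reduce the minimum to a single inequality by exploiting conjugation. Since the triangular partitions are closed under conjugation, I would first establish a conjugation symmetry relating $\E_{\tau'}^{(n)}$ to $\E_\tau^{(n)}$ through the fundamental involution $\omega$ (with $\omega\,s_\mu=s_{\mu'}$) acting on the $\boldsymbol{x}$-component, accompanied at most by a permutation or grading reversal of the $\boldsymbol{q}$-variables. The crucial point is that no transpose is applied to the left-hand factors, so the number of parts of each left index — and hence $\length(\E_\tau^{(n)})$ — is unchanged: $\length(\E_{\tau'}^{(n)})=\length(\E_\tau^{(n)})$. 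It therefore suffices to prove the one-sided bound $\length(\E_\tau^{(n)})\leq\length(\tau)$ for every triangular $\tau$; applying it to $\tau'$ and invoking the symmetry yields $\length(\E_\tau^{(n)})\leq\length(\tau')$, and together these give the stated minimum. The symmetry itself should be read off from the $q\leftrightarrow t$ invariance and the transpose behaviour of the area/dinv--LLT model of \cite{AnyLine} at the two-variable level, then promoted to the full $\boldsymbol{q}$-alphabet.

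For the one-sided bound I would exploit Schur positivity to turn it into a vanishing statement. Since $e_k^\perp s_\lambda=\sum_{\nu\subset_k\lambda}s_\nu$ is nonzero exactly when $\length(\lambda)\geq k$ — the bottom $k$ cells of the first column form a removable vertical $k$-strip whenever $k\leq\length(\lambda)$ — and since positivity rules out cancellation, every Schur-positive $f$ satisfies $\length(f)=\max\{k:e_k^\perp f\neq 0\}$. Consequently $\length(\E_\tau^{(n)})\leq\length(\tau)$ is equivalent to the vanishing $e_k^\perp\,\E_\tau^{(n)}=0$ for all $k>\length(\tau)$, with $e_k^\perp$ acting on the left component. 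This is the form I would attack with operator methods: build $\E_\tau^{(n)}$ from the Negut creation operators of \cite{AnyLine} in the elliptic Hall algebra, and show by induction on the rows of $\tau$ that the raising of the $\boldsymbol{q}$-grading proceeds in a row-indexed fashion that never produces a left-hand index with more than $\length(\tau)$ parts. The combinatorial model, in which the graded objects live inside the Ferrers diagram of $\tau$ and hence occupy at most $\length(\tau)$ rows, is what makes such a bound plausible; the $t=0$ specialization of \autoref{q_specialization} anchors the length-one behaviour, while \autoref{nabla_prop} provides a consistency constraint relating the bounds for $\tau$ and its staircase translates $\tau+\delta(n)$.

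The stability of $\A_\tau=\langle\E_\tau^{(n)},e_n\rangle$ is comparatively direct and follows from the stabilization of $\overline{\E}_\tau^{(n)}$. Writing $e_n=s_{1^n}$ and stripping first columns, the single column $1^n$ becomes the empty partition, so that $\A_\tau^{(n)}=\sum_\lambda a_{\lambda,1^n}\,s_\lambda$ is precisely the coefficient of the empty right-hand factor in $\overline{\E}_\tau^{(n)}$ (the only $\mu\vdash n$ whose column-stripping is empty is $\mu=1^n$). Since $\overline{\E}_\tau^{(n)}$ equals the fixed $\overline{\E}_\tau$ once $n>n_0$, this coefficient is constant in $n$, which gives the asserted independence and identifies $\A_\tau$ with $\langle\overline{\E}_\tau,s_\emptyset\rangle$.

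The genuine obstacle is the one-sided length bound in the multivariate regime. For two $q$-parameters the restriction $\varepsilon_\tau^{(n)}=\E_\tau^{(n)}\big|_{\leq 2}$ is explicitly available through the Negut operators and the LLT formula of \cite{AnyLine}, and there the bound is either vacuous or directly checkable; but for more than two parameters the $\E_\tau^{(n)}$ are at present only specified as the conjectural ``good'' extensions, so a rigorous argument must first fix a workable all-$k$ definition — for instance a canonical family of creation operators in the elliptic Hall algebra — before the vanishing $e_k^\perp\,\E_\tau^{(n)}=0$ can even be phrased unconditionally. Establishing both that the conjugation symmetry and that the row-indexed degree control survive this extension is where the genuine difficulty of the conjecture lies.
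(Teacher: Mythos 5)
There is a fundamental mismatch to flag first: the statement you are working on is, in the paper, exactly what its label says --- a conjecture. The section containing it opens with ``For the sequel, we assume that the following holds,'' and no proof is given or claimed anywhere in the paper. This is not an oversight: for alphabets $\boldsymbol{q}$ of more than two variables, the $\E_{\tau}^{(n)}$ are not defined unconditionally in the paper at all; they are only characterized as sought-for ``good'' extensions of the computable two-variable objects $\varepsilon_{\tau}^{(n)}=\E_{\tau}^{(n)}\big|_{\leq 2}$. Your own closing paragraph concedes precisely this point, which means your text is a reduction strategy contingent on a definition that does not yet exist, not a proof. Note also that at the two-variable level the length bound carries no evidence to ``promote'': a Schur function $s_\lambda(q,t)$ in two variables vanishes whenever $\length(\lambda)>2$, so the inequality is automatic there for any $\tau$ with $\min(\length(\tau),\length(\tau'))\geq 2$, and says nothing about the multivariate extension.

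Beyond that structural issue, two specific steps would not survive scrutiny. First, the conjugation symmetry you posit --- that $\E_{\tau'}^{(n)}$ is obtained from $\E_{\tau}^{(n)}$ by an $\omega$-type transformation of the $\boldsymbol{x}$-components leaving the left-hand factors untouched, so that $\length(\E_{\tau'}^{(n)})=\length(\E_{\tau}^{(n)})$ --- is strictly stronger than anything the paper records: the paper's Property~\ref{conjugation} only observes (without proof) the equality $\A_\tau=\A_{\tau'}$ of the \emph{alternating} components, not a symmetry of the full tensor. You would be replacing one open statement by another, stronger one. Second, your argument for the $n$-independence of $\A_\tau$ rests on the stabilization of $\overline{\E}_{\tau}^{(n)}$, which is itself only an observed phenomenon in the paper (illustrated by the Schur-positive increments $\overline{\E}_{\tau}^{(n+1)}-\overline{\E}_{\tau}^{(n)}$, not proved); and even granting it, you would obtain constancy of the coefficient of the empty right-hand factor only for $n$ beyond the stabilization threshold $n_0$, whereas the conjecture asserts independence for all $n\geq \length(\tau)+\maxmult(\tau)$ --- a priori the increments could contain terms $f\otimes 1$ coming from $\mu=1^{n+1}$, and nothing you cite rules this out. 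On the positive side, your observation that a Schur-positive $f$ satisfies $\length(f)=\max\{k:\ e_k^\perp f\neq 0\}$, turning the length bound into the vanishing statement $e_k^\perp\E_{\tau}^{(n)}=0$ for $k>\min(\length(\tau),\length(\tau'))$, is correct and a sensible way to phrase an eventual attack; but the core assertion remains open, consistent with the paper's decision to assume it rather than prove it.
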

It follows that, $\E_{\tau}^{(n)}$ and $\varepsilon_{\tau}^{(n)}$ must coincide whenever $\min(\length(\tau),\length(\tau'))\leq 2$.  
For all $n$, the cases with at most one part, or having all parts equal to $1$,  are all covered by the stable formulas
\begin{align}
            &\E_{0}^{(n)}=1\otimes  s_{1^n}, &&\BarE_{0} =1\otimes  1;\label{E0_3}\\
            &\E_{d}^{(n)}=s_{d-1}\otimes s_{21^{n-2}} + s_d\otimes s_{1^n}, &&\BarE_{d}=s_{d-1}\otimes s_{1} + s_d\otimes 1;\label{Ed_3}
            \intertext{and}
             &\E_{1^d}^{(n)}=\sum_{k=0}^d s_{d-k}\otimes s_{2^k1^{n-2k}}, &&\BarE_{1^d}=\sum_{k=0}^d s_{d-k}\otimes s_{1^k}.\label{E1d_3}
 \end{align}
In each instance, these formulas hold for any $n\geq \length(\tau)+\maxmult(\tau)$. They are thus stable with no further restriction. We can thus present them respectively in the format $\BarE_{\tau}$, as shown above,
without loss of information.

For two-row triangular partitions, we have the following explicit values when $n=\length(\tau)+\maxmult(\tau)$. Notice that this forces $n=3$. 
\begin{proposition}\label{Prop_cas_n3} For any two row triangular partition $\tau=(a,b)$, hence with $2b\leq a+1$, we have 
 \begin{equation}\label{formule _A_ab}
   \A_{\tau}= \sum_{\substack{0\leq d\leq b\\
                                                 3d\leq a+b}} s_{(a+b-2d,d)}.
 \end{equation}
 Moreover,
 \begin{equation}\label{formule _E_ab_3}
   \E_{\tau}^{(3)}= (e_2^\perp  \A_{\tau})\otimes s_3+ (e_1^\perp  \A_{\tau})\otimes s_{21} + \A_\tau\otimes s_{111}.
 \end{equation}	
 Furthermore $e_2^\perp  \A_{\tau+\delta(3)} =  \A_{\tau}$, which implies that $e_2^\perp \E_{\tau+\delta(3)}^{(3)} =\E_{\tau}^{(3)}$.
 \end{proposition}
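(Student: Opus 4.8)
The plan is to reduce the whole statement to the explicit two-variable theory of \cite{AnyLine}. Since $\tau=(a,b)$ has $\length(\tau)=2$, we have $\min(\length(\tau),\length(\tau'))\leq 2$, so by the remark following the Coefficient-Length Conjecture~\ref{length_conjecture} the object $\E_{\tau}^{(3)}$ coincides with the two-variable $\varepsilon_{\tau}^{(3)}$; in particular each coefficient $\sum_{\lambda}a_{\lambda\mu}\,s_\lambda$ is a symmetric function in the two variables $q,t$, and the three quantities $\langle \E_{\tau}^{(3)},s_3\rangle$, $\langle \E_{\tau}^{(3)},s_{21}\rangle$ and $\A_\tau=\langle \E_{\tau}^{(3)},s_{111}\rangle$ are all computable from the area/dinv/LLT formula of \cite{AnyLine}. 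The argument then splits into three tasks: (i) the Schur expansion \eqref{formule _A_ab} of the sign component $\A_\tau$; (ii) the two identities $\langle \E_{\tau}^{(3)},s_3\rangle=e_2^\perp\A_\tau$ and $\langle \E_{\tau}^{(3)},s_{21}\rangle=e_1^\perp\A_\tau$, the remaining component $\A_\tau\otimes s_{111}$ of \eqref{formule _E_ab_3} being the very definition of $\A_\tau$; and (iii) the two compatibilities with $\nabla$, which I deduce formally from (i) and (ii).

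For task (i) I would specialize the \cite{AnyLine} formula to the two-row shape. Here the partitions contained in $\tau$ are exactly the pairs $\nu=(\nu_1,\nu_2)$ with $a\geq\nu_1\geq\nu_2$, $\nu_1\leq a$, $\nu_2\leq b$, and the sign component is the resulting $q,t$-generating function over these $\nu$, read off from the LLT terms by the pairing with $e_3=s_{111}$. Organizing the sum according to the smaller part $d=\nu_2$ and collapsing the surviving contributions through the two-variable identity $s_{(p,r)}(q,t)=\sum_{j=r}^{p}q^{p+r-j}t^{j}$, the range $0\leq d\leq b$ should come from $\nu_2\leq b$, while the cutoff $3d\leq a+b$ should come from the triangularity hypothesis $2b\leq a+1$, which fixes the slope of the line of $\tau$. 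Observe that the single-variable part of $\A_\tau$ (the component surviving when the $\boldsymbol{q}$-alphabet is reduced to one variable, as in \autoref{q_specialization}) is just the $d=0$ term $s_{(a+b)}$, because $s_{(p,r)}$ vanishes in one variable as soon as $r\geq 1$; thus the terms $d\geq 1$ of \eqref{formule _A_ab} carry genuinely two-variable information. This identification of the generating function with $\sum_{d}s_{(a+b-2d,d)}$ is the main computational obstacle of the proof.

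For task (ii) I would compute the two remaining Schur-in-$\boldsymbol{x}$ components from the same formula and match them term by term against the skewings, using $e_1^\perp s_{(p,r)}=s_{(p-1,r)}+s_{(p,r-1)}$ and $e_2^\perp s_{(p,r)}=s_{(p-1,r-1)}$ (each right-hand term being kept only when it remains a partition). Equivalently, \eqref{formule _E_ab_3} may be read as the $n=3$ instance of the expectation that, for a two-row $\tau$, only hook shapes $\mu=(m,1^{\,n-m})$ occur in $\E_{\tau}^{(n)}$, with $\langle \E_{\tau}^{(n)},s_{(m,1^{\,n-m})}\rangle=e_{m-1}^\perp\A_\tau$; since every partition of $3$ is a hook, this collapses exactly to the three stated components and to the stable form $\BarE_\tau=\A_\tau\otimes 1+(e_1^\perp\A_\tau)\otimes s_1+(e_2^\perp\A_\tau)\otimes s_2$.

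Finally, task (iii) is purely formal once (i) and (ii) are in hand. As $\delta(3)=(2,1)$, we have $\tau+\delta(3)=(a+2,b+1)$, so applying $e_2^\perp$ to \eqref{formule _A_ab} written for $\A_{(a+2,b+1)}$ sends each $s_{(a+b+3-2d,d)}$ to $s_{(a+b+2-2d,d-1)}$ and annihilates the $d=0$ term; reindexing by $d'=d-1$ turns the constraints $1\leq d\leq b+1$, $3d\leq a+b+3$ into $0\leq d'\leq b$, $3d'\leq a+b$, which is precisely \eqref{formule _A_ab} for $\A_{(a,b)}$. This proves $e_2^\perp\A_{\tau+\delta(3)}=\A_\tau$. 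The implication $e_2^\perp\E_{\tau+\delta(3)}^{(3)}=\E_{\tau}^{(3)}$ then follows by applying $e_2^\perp$ to the first tensor factor of \eqref{formule _E_ab_3} written for $\tau+\delta(3)$, using that $e_1^\perp$ and $e_2^\perp$ commute together with the relation just established. I expect task (i), and correspondingly the parallel component computations in (ii), to be the only genuinely delicate steps, the remainder being routine skew-Schur bookkeeping.
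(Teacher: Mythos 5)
Your task~(iii) is carried out correctly and is, essentially verbatim, the first part of the paper's own proof: apply $e_2^\perp$ to \autoref{formule _A_ab} written for $\tau+\delta(3)=(a+2,b+1)$, use $e_2^\perp s_{(c,d)}=s_{(c-1,d-1)}$ (zero when $d=0$), reindex, and then push $e_2^\perp$ through the first tensor factor of \autoref{formule _E_ab_3} using the commutation of $e_1^\perp$ and $e_2^\perp$. The problem is tasks~(i) and~(ii), which carry the entire substance of \autoref{formule _A_ab} and \autoref{formule _E_ab_3} and which you yourself flag as ``the main computational obstacle'': as written they are not proofs but statements of intent. You never actually extract anything from the area/dinv/LLT formula of \cite{AnyLine}; the phrases ``should come from $\nu_2\leq b$'' and ``should come from the triangularity hypothesis'' stand in for the whole computation, and the second attribution is in fact off-track --- the cutoff $3d\leq a+b$ is simply the condition that $(a+b-2d,d)$ be a partition (i.e.\ $a+b-2d\geq d$), not something imported from the slope of the bounding line. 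Also, your reformulation of \autoref{formule _E_ab_3} via hook coefficients $\langle \E_{\tau}^{(n)},s_{(m,1^{n-m})}\rangle=e_{m-1}^\perp\A_\tau$ is \autoref{skew_hook}, which in this paper is an \emph{observed} (conjectural) property, so it cannot be invoked as known input; fortunately you only use it as motivation.

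What your proposal misses is the mechanism that lets the paper prove (i) and (ii) without ever touching dinv or LLT polynomials: the recursion \autoref{nabla_prop}, $\nabla(\E_{\tau}^{(3)})=\E_{\tau+\delta(3)}^{(3)}$, serves as the induction engine. The paper records the explicit length-two tensor expansions of $\nabla(s_3)$, $\nabla(s_{21})$, $\nabla(s_{111})$, applies $\nabla$ to the claimed expression
$(e_2^\perp\A_\tau)\otimes s_3+(e_1^\perp\A_\tau)\otimes s_{21}+\A_\tau\otimes s_{111}$,
and checks by duality computations of the form $\langle s_\mu,\,g\cdot h\rangle=\langle g^\perp s_\mu,\,h\rangle$ (Pieri-rule bookkeeping on two-row shapes) that the resulting coefficients of $s_{111}$ and $s_{21}$ reproduce the claimed formulas for $(a+2,b+1)$; the induction is well-founded because every two-row triangular $(a,b)$ with $b\geq 1$ equals $(a-2,b-1)+\delta(3)$ with $(a-2,b-1)$ again triangular (the constraint $2b\leq a+1$ is unchanged), and the base case is the one-row formula \eqref{Ed_3}, consistent with $e_2^\perp\A_{(d)}=0$. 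If you want to salvage your route, you would have to genuinely carry out the two-row specialization of the \cite{AnyLine} combinatorics, which is a much heavier computation than the finite, self-contained inductive step above; as it stands, the proposal proves only the formal implication and leaves the two identities it depends on unestablished.
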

 \begin{proof}[\bleu{\bf Proof}]
 The last implication follows directly from \autoref{formule _E_ab_3}. Indeed, assume $e_2^\perp  \A_{\tau+\delta(3)} =  \A_{\tau}$, and  apply $e_2^\perp$ to both sides the $\tau+\delta(3)$ instance of \autoref{formule _E_ab_3}. Then, we do get
 \begin{align*}
   e_2^\perp\E_{\tau+\delta(3)}^{(3)}&= e_2^\perp(e_2^\perp  \A_{\tau+\delta(3)})\otimes s_3+ e_2^\perp(e_1^\perp  \A_{\tau+\delta(3)})\otimes s_{21} + e_2^\perp\A_{\tau+\delta(3)}\otimes s_{111}\\
   &= e_2^\perp(\A_{\tau})\otimes s_3+ e_1^\perp(\A_{\tau})\otimes s_{21} + \A_{\tau}\otimes s_{111} = \E_{\tau}^{(3)},  
   \end{align*}
 since the operator $e_2^\perp$ and $e_1^\perp$ commute. Recall that 
 	$$e_2^\perp s_{(c,d)} =\begin{cases}
     s_{(c-1,d-1)}  & \text{if}\ d>0, \\
     0& \text{if}\ d=0.
\end{cases}$$
Thus, assuming \autoref {formule _A_ab} with $\tau+\delta(3) = (a+2,b+1)$, we readily calculate that
$$
 e_2^\perp  \A_{\tau+\delta(3)} =\sum_{\substack{1\leq d\leq b+1\\3d\leq a+b+3}}  e_2^\perp s_{(a+b+3-2d,d)}
                                                 =\sum_{\substack{1\leq d\leq b+1\\3d\leq a+b+3}}  s_{(a+b+2-2d,d-1)}
                                                 =\sum_{\substack{0\leq d\leq b\\3d\leq a+b}}  s_{(a+b-2d,d)}= \A_\tau,
$$
replacing $d-1$ by $d$ for the summation in the middle. 

We  recursively show that both \autoref{formule _E_ab_3} and \autoref{formule _A_ab} hold. Observe that, in tensor product terms, we have
  \begin{align}
       &  \nabla(s_3) = s_{22} \otimes s_{21} + s_{32}\otimes s_{111},\\
       & \nabla(s_{21}) =- (s_{21}\otimes s_{21} + s_{31}\otimes s_{111}),\\
       &  \nabla(s_{111})  = 1\otimes s_3 +(s_1+s_2)\otimes s_{21} + (s_3+s_{11})\otimes s_{111}.
    \end{align}
Hence
\begin{align*}
   \E_{\tau+\delta(3)}^{(3)}&= \nabla(\E_{\tau}^{(3)})\\
   &=\nabla(   (e_2^\perp  \A_{\tau})\otimes s_3+ (e_1^\perp  \A_{\tau})\otimes s_{21} + \A_\tau\otimes s_{111})\\
   &=  \A_\tau \otimes s_{3} \\
	&\qquad  + (s_{22} \cdot (e_2^\perp  \A_{\tau})- s_{21}\cdot (e_1^\perp  \A_{\tau})+(s_1+s_2)\cdot  \A_\tau) \otimes s_{21} \\
	&\qquad + (s_{32}\cdot (e_2^\perp  \A_{\tau}) - s_{31}\cdot (e_1^\perp  \A_{\tau}) +(s_3+s_{11})\cdot  \A_\tau)\otimes s_{111}
   \end{align*}
where all calculations are restricted to length two.  We have already checked  that $\A_\tau =  e_2^\perp  \A_{\tau+\delta(3)} $ is compatible with \autoref{formule _A_ab}. Let us see that the coefficients of $s_{111}$ in $\E_{\tau+\delta(3)}^{(3)}$ agrees with the right-hand side of the above formula, namely:
  $$\A_{\tau+\delta(3)} =s_{32}\cdot (e_2^\perp  \A_{\tau}) - s_{31}\cdot (e_1^\perp  \A_{\tau}) +(s_3+s_{11})\cdot  \A_\tau.$$
 To this end, consider any term $s_{(a+b+3-2d,d)}$ occurring in the right-hand side of \autoref{formule _A_ab} for $\tau=(a+2,b+1)$, hence with $0\leq d\leq b+1$ and $3d\leq a+b+3$. We breakup the scalar product:
    $$\langle s_{(a+b+3-2d,d)},s_{32}\cdot (e_2^\perp  \A_{\tau}) - s_{31}\cdot (e_1^\perp  \A_{\tau}) +(s_3+s_{11})\cdot  \A_\tau\rangle,$$
 into its three components, and use the duality of multiplication by $g$ and $g^\perp$ to get the evaluations
\begin{align*}
    \langle s_{(a+b+3-2d,d)},s_{32}\cdot (e_2^\perp  \A_{\tau})\rangle &=  \langle e_2\, s_{32}^\perp\, s_{(a+b+3-2d,d)},\A_{\tau}\rangle\\
				&=  \langle s_{(a+b+1-2d,d-1)},\A_{\tau}\rangle,\\
    \langle s_{(a+b+3-2d,d)},-s_{31}\cdot (e_1^\perp  \A_{\tau}) \rangle &= -\langle e_1\, s_{31}^\perp\,  s_{(a+b+3-2d,d)},  \A_{\tau} \rangle \\
    				&= -\langle s_{(a+b+1-2d,d-1)}+s_{(a+b-2d,d)},  \A_{\tau} \rangle, \\
    \langle s_{(a+b+3-2d,d)}, (s_3+s_{11})\cdot  \A_\tau \rangle &=\langle (s_3+s_{11})^\perp\,s_{(a+b+3-2d,d)}, \A_\tau \rangle\\
    				&=\langle (s_3)^\perp\,s_{(a+b+3-2d,d)}+s_{(a+b+2(d-1),d-1)}, \A_\tau \rangle.
 \end{align*}
To evaluate these scalar product, we now assume by recurrence that $\A_\tau$ is given by \autoref{formule _A_ab}, hence it only involves terms of the form $s_{(a+b-2d',d')}$, this reduces our calculation to 
  $$\langle s_{(a+b+3-2d,d)},\A_{\tau+\delta(3)}\rangle = \langle s_{(a+b-2(d-1),d-1)},\A_{\tau+\delta(3)}\rangle=1.$$
 The other term corresponds to an entirely similarly verification that $e_1^\perp \A_{\tau+\delta(3)}=(s_{22} \cdot (e_2^\perp  \A_{\tau})- s_{21}\cdot (e_1^\perp  \A_{\tau})+(s_1+s_2)\cdot  \A_\tau)$. 
 \end{proof}
The following explicit values\footnote{Calculated in part using Sagemath code made available by the authors of~\cite{AnyLine}.} for  $\E_{\tau}^{(n)}=\varepsilon_{\tau}^{(n)}$, for $n\geq \length(\tau)+\maxmult(\tau)$, illustrate that
	$$\overline{ \E}_{\tau}^{(n+1)} -\overline{ \E}_{\tau}^{(n)},$$
 is Schur positive\footnote{With positive integer coefficients for $s_\lambda\otimes s_\mu$ terms.}. Clearly stability means that this difference vanishes whenever $n$ is large enough. For instance, for $b=1$ \autoref{formule _E_ab_3} becomes stable with:
\begin{equation}\label{cas_a1}
	\overline{\E}_{(a,1)}=\overline{\E}^{(3)}_{(a,1)} + \rouge{s_{a-1}\otimes s_{11}},\qquad \rouge{n\geq 4}.	    
  \end{equation}
Further examples of successive increments up to the stable value, are the following:
\begin{tcolorbox}$\begin{aligned}
\BarE_{211}^{(4)}&=(s_4+s_{21})\otimes 1+ (s_2+s_3+s_{11})\otimes s_{1} + s_1\otimes s_{2} + s_2 \otimes s_{11},\hfill{}\\  
\BarE_{211}^{(5)}&=\overline{\E}_{211}^{(4)} +s_1\otimes s_{11}+1\otimes s_{21},\\
\BarE_{211}&=\overline{\E}_{211}^{(5)} +s_1\otimes s_{111};
\end{aligned}$\end{tcolorbox}

\begin{tcolorbox}$\begin{aligned}
\BarE_{ 32 }^{(3)}&=( s_{5} + s_{31} ) \otimes 1 
	+ ( s_{3} + s_{4} + s_{21} ) \otimes s_{1}
	+ s_{2} \otimes s_{2}\\
\BarE_{ 32 }&=\overline{\E}_{ 32 }^{(3)}+ (s_{3}+s_{11}) \otimes s_{11};
\end{aligned}$\end{tcolorbox}

\begin{tcolorbox}$\begin{aligned}
\BarE_{ 221 }^{(4)}&=( s_{5} + s_{31} ) \otimes 1 
	+ ( s_{3} + s_{4} + s_{21} ) \otimes s_{1}
	+ s_{2} \otimes s_{2}
	+ ( s_{3} + s_{11} ) \otimes s_{11},\\
\BarE_{ 221 }^{(5)}&=\BarE_{ 221 }^{(4)}	+ s_2 \otimes s_{11}	+ s_{1} \otimes s_{21},\\
\BarE_{ 221 }&=\BarE_{ 221 }^{(5)}	+ s_2 \otimes s_{111};
\end{aligned}$\end{tcolorbox}

\begin{tcolorbox}$\begin{aligned}
\BarE_{ 2111 }^{(6)}&=( s_{5} + s_{31} ) \otimes 1 
	+ ( s_{3} + s_{4} + s_{21} ) \otimes s_{1}
	+ s_{2} \otimes s_{2}
	+  ( s_{2} + s_{3} + s_{11} ) \otimes s_{11},\\
	&\qquad + s_1 \otimes s_{21}+s_2 \otimes s_{111},\\
\BarE_{ 2111 }^{(7)}&=\BarE_{ 2111 }^{(6)}+1 \otimes s_{211}+ s_2\otimes s_{111}\\
\BarE_{ 2111 }&=\BarE_{ 2111 }^{(7)}+ s_1 \otimes s_{1111}.
\end{aligned}$\end{tcolorbox}

\section{\bleu{Observed Properties}}
Among observed property are the following. First, we recall that $\A_\tau$ is independent form $n$ (assumed at least to be larger than $\length(\tau)+1$).
 \begin{property}[Conjugation]\label{conjugation} For any $\tau$ and $n$, the respective alternating components of $\E_\tau^{(n)}$ and $\E_{\tau'}^{(n)}$ are equal. Namely, $\A_\tau = \A_{\tau'}$.
\end{property}
Small values of $\A_\tau$ are as follows (for dominant triangular partitions).

\begin{small}
\begin{tcolorbox}$\begin{aligned}
&\A_{0}=1, && \A_{1}=s_{1},&& \A_{2}=s_{2},&&\A_{3}=s_{3},&& \ldots\\
&\A_{21}=s_{3} + s_{11}, && \A_{31}=s_{4} + s_{21},&& \A_{41}=s_{5} + s_{31},&&\A_{51}=s_{6} + s_{41},&& \ldots\\
&\A_{32}=s_{5} + s_{31}, && \A_{42}=s_{6} + s_{41} + s_{22}, && \A_{52}=s_{7} + s_{51} + s_{32},&& \A_{62}=s_{8} + s_{61} + s_{42},&& \ldots
\end{aligned}$\end{tcolorbox}

\begin{tcolorbox}$\begin{aligned}
&\A_{53}=s_{8} + s_{61} + s_{42}, 
&&\A_{63}=s_{9} + s_{71} + s_{52} + s_{33}, &
&\A_{73}=s_{10.} + s_{81} + s_{62} + s_{43}, \\
&\A_{83}=s_{11.} + s_{72} + s_{53} + s_{91}, &
&\A_{93}=s_{12.} + s_{82} + s_{63} + s_{10.1}, && \ldots
\end{aligned}$\end{tcolorbox}

\begin{tcolorbox}$\begin{aligned}
&\A_{321}=s_{6} + s_{31} + s_{41} + s_{111}, \\
&\A_{421}=s_{7} + s_{32} + s_{41} + s_{51} + s_{211},\\
&\A_{431}=s_{8} + s_{42} + s_{51} + s_{61} + s_{311},\\
&\A_{531}=s_{9} + s_{71} + s_{61} + s_{52} + s_{42} + s_{411} + s_{221},\\
&\A_{631}=s_{10.} + s_{81} + s_{62} + s_{43} + s_{52} + s_{71} + s_{511} + s_{321}; 
\end{aligned}$\end{tcolorbox}

\begin{tcolorbox}$\begin{aligned}
&\A_{432}=s_{9} + s_{33} + s_{52} + s_{61} + s_{71} + s_{411},\\
&\A_{532}=s_{10.} + s_{81} + s_{62} + s_{43} + s_{52} + s_{71} + s_{511} + s_{321}, \\
&\A_{641}=s_{11.} + s_{91} + s_{72} + s_{53} + s_{62} + s_{81} + s_{611} + s_{421}, \\
&\A_{741}=s_{12.} + s_{10.1} + s_{72} + s_{91} + s_{82} + s_{63} + s_{53} + s_{331} + s_{521} + s_{711},\\
&\A_{841}=s_{13.} + s_{92} + s_{73} + s_{54} + s_{63} + s_{10.1} + s_{11.1} + s_{82} + s_{431} + s_{621} + s_{811}, \\
&\A_{542}=s_{11.} + s_{91} + s_{72} + s_{53} + s_{62} + s_{81} + s_{611} + s_{421}, \\
&\A_{642}=s_{12.} + s_{10.1} + s_{91} + s_{82} + s_{72} + s_{63} + s_{62} + s_{44} + s_{222} + s_{421} + s_{521} + s_{711}, \\
&\A_{742}=s_{13.} + s_{92} + s_{10.1} + s_{73} + s_{82} + s_{54} + s_{63} + s_{72} + s_{11.1} + s_{322} + s_{431} + s_{521} + s_{621} + s_{811};
\end{aligned}$\end{tcolorbox}

\begin{tcolorbox}$\begin{aligned}
&\A_{4321}=s_{10.} + s_{42} + s_{43} + s_{61} + s_{62} + s_{71} + s_{81} + s_{311} + s_{411} + s_{511} + s_{1111}, \\
&\A_{5321}=s_{11.} + s_{43} + s_{52} + s_{53} + s_{62} + s_{71} + s_{72} + s_{81} + s_{91} \\
   &\qquad\qquad + s_{321} + s_{411} + s_{421} + s_{511} + s_{611} + s_{2111}, \\
&\A_{5421}=s_{12.} + s_{44} + s_{53} + s_{62} + s_{63} + s_{72} + s_{81} + s_{82} + s_{91} + s_{10.1} \\
   &\qquad\qquad + s_{331} + s_{421} + s_{511} + s_{521} + s_{611} + s_{711} + s_{3111}, \\
&\A_{5431}=s_{13.} + s_{54} + s_{63} + s_{72} + s_{73} + s_{82} + s_{91} + s_{92} + s_{10.1} + s_{11.1} \\
   &\qquad\qquad + s_{431} + s_{521} + s_{611} + s_{621} + s_{711} + s_{811} + s_{4111}, \\
&\A_{6421}=s_{13.} + s_{53} + s_{54} + s_{63} + 2 s_{72} + s_{73} + s_{82} + s_{91} + s_{92} + s_{10.1} + s_{11.1} \\
   &\qquad\qquad + s_{421} + s_{431} + 2 s_{521} + s_{611} + s_{621} + s_{711} + s_{811} + s_{2211} + s_{4111}.
\end{aligned}$\end{tcolorbox}
\end{small}

To state our next observation, we use \define{Frobenius's notation} for hook shape partitions, writing  $(a\,|\,\ell )$ for the partition $(a+1,1^\ell)$ (see Figure~\ref{Fig1}). In other terms, $a$ is the \define{arm} of the hook, and $\ell$ is its \define{leg}. 
\begin{figure}[h]
\begin{center}
 \begin{tikzpicture}[scale=.6]
 \Yboxdim{1cm}
 \Yfillcolour{azure!80}
  	 \tyng(0cm,1cm,1,1,1);
\Yfillcolour{red!60}
  	 \tyng(1cm,0cm,5);
\Yfillcolour{yellow}
	\tyng(0cm,0cm,1);
	\node at (-.5,2.5) {$\ell\left.\rule{0cm}{28pt}\right\{$};
	\node at (3.5,-0.5) {$\underbrace{\hskip3cm}_{\textstyle a}$};
 \end{tikzpicture}
 \end{center}
    \vskip-20pt
\caption{The hook shape $(a\,|\,\ell )$.}\label{Fig1}
\end{figure}
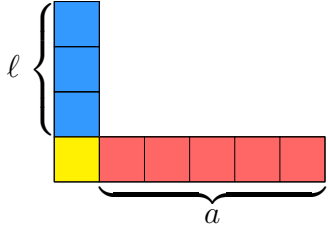
Recall  that the operator $e_k^\perp$ is the dual of multiplication by $e_k$ for the Hall scalar product, which effect is well known to be given by the Pieri rule.

\begin{property}[Hook-Components]\label{skew_hook} For any size $n$ hook shape $(a\,|\,\ell)$, we have the identity
   \begin{equation}
       e_a^\perp \A_\tau = \langle \E_\tau^{(n)}, s_{(a\,|\,\ell)}\rangle,
   \end{equation}
  hence the coefficient of $s_{(a\,|\,\ell)}$ in $\E_\tau^{(n)}$ is independent of $n$.
\end{property}
Recall that, for two single variables $u$ and $v$, the plethysm $p_k[u-\varepsilon v]$ is defined to be $(u^k-(-1)^k\,v^k)$. As usual this is extended linearly and multiplicatively to symmetric functions expressed in the power-sum basis. One checks that 
\begin{equation}
\frac{1}{u+v} s_\mu[u-\varepsilon v]:=\begin{cases}
     u^av^\ell & \text{if}\ \mu=(a\,|\,\ell), \\
      0 & \text{otherwise}.
\end{cases}
\end{equation}
In other terms, the plethysm considered in the left hand side of the above establishes a direct bijective correspondance 
   $$s_{(a\,|\,\ell)}\leftrightarrow u^av^\ell,$$
 between the hook portion of a linear combinations of Schur functions, and polynomials in $u$ and $v$. Thus the following proposition entirely characterizes the hook portion of $\A_\tau$. It follows from \autoref{skew_hook} and \autoref{q_specialization}. 

 \begin{proposition}[Hook-alternants]
 For any triangular partition $\tau$, we have
\begin{equation}\label{formula_hooks_en}
   \frac{1}{u+v}\A_\tau[u-\varepsilon v]=u^{|\tau|-\binom{k+1}{2}}(u^2+v)\cdots (u^{m}+v),
 \end{equation}
 where $m=\min(\length(\tau),\length(\tau'))$.
 \end{proposition}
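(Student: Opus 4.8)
The plan is to compute the hook part of $\A_\tau$ by converting the plethystic bracket into a generating series of $e_a^\perp$-images, then to feed in successively \autoref{skew_hook} and \autoref{q_specialization}, reducing everything to the hook expansion of the Whittaker polynomial $W_\mu$, which is itself controlled by a classical product formula.

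First I would record the generating-series identity $\sum_{a\geq 0} v^a\, e_a^\perp f = f[\boldsymbol q - \varepsilon v]$, valid for a symmetric function $f$ in the alphabet $\boldsymbol q$. Specialising $\boldsymbol q$ to a single variable $u$ gives $\A_\tau[u-\varepsilon v]=\sum_{a\geq 0} v^a\,(e_a^\perp\A_\tau)[u]$, so the quantity to compute becomes
\[
 \frac{1}{u+v}\A_\tau[u-\varepsilon v]=\frac{1}{u+v}\sum_{a\geq 0} v^a\,(e_a^\perp\A_\tau)[u].
\]
The gain is that each single-variable value is now accessible. By \autoref{skew_hook}, $e_a^\perp\A_\tau=\langle \E_\tau^{(n)},s_{(a\,|\,\ell)}\rangle$ for the size-$n$ hook with $\ell=n-1-a$; since evaluating the $\boldsymbol q$-alphabet at $u$ acts on a different tensor factor than the $\boldsymbol x$-pairing, the two commute and $(e_a^\perp\A_\tau)[u]=\langle \E_\tau^{(n)}(u;\boldsymbol x),s_{(a\,|\,n-1-a)}\rangle$. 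Then \autoref{q_specialization} rewrites this as $u^{|\tau|-\eta(\mu)}\langle W_\mu(u;\boldsymbol x),s_{(a\,|\,n-1-a)}\rangle$ with $\mu=\mu(\tau,n)$. Summing over $a$, the problem reduces to the arm-generating function of the hook coefficients of $W_\mu$:
\[
 \frac{1}{u+v}\A_\tau[u-\varepsilon v]=\frac{u^{|\tau|-\eta(\mu)}}{u+v}\,\sum_{a\geq 0}\langle W_\mu(u;\boldsymbol x),s_{(a\,|\,n-1-a)}\rangle\,v^a .
\]
A useful consistency check is that, although $\mu$ and $\eta(\mu)$ depend on $n$, the largest part $\mu_1$ does not: it always equals $k+1$, where $k$ is the number of distinct parts of $\tau$; so the final answer will be $n$-free.

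The heart of the argument is the product formula for that inner sum, which I would derive from the classical hook expansion of the modified Macdonald polynomial, $\langle\widetilde H_\mu,s_{(n-1-r\,|\,r)}\rangle = e_r[B_\mu-1]$, where $B_\mu=\sum_{c\in\mu}\beta_c$ is the biexponent generating polynomial of the cells and the $-1$ deletes the corner. Packaged through the hook-extraction map in the $\boldsymbol x$-variables this becomes $\frac{1}{s+w}\widetilde H_\mu[s-\varepsilon w]=\prod_{c\neq(0,0)}(s+\beta_c\,w)$, a degree $n-1$ product over the non-corner cells. Since $W_\mu$ is the top-degree component of $\widetilde H_\mu$ in the Macdonald parameter recording the leg (row) statistic, extracting that top degree forces the $w$-factor on every cell off the bottom row and leaves a free choice on the bottom row. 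The surviving product therefore runs over the $\mu_1-1$ bottom-row cells, in columns $1,\dots,\mu_1-1$; after the specialisation (leg variable set to $1$, arm variable to $v$) each contributes $(v+q^{\,j})$, while the forced cells produce the overall power $q^{\,\eta(\mu)-\binom{\mu_1}{2}}$ — the exponent being $\eta(\mu)$ with the bottom-row contribution $0+1+\dots+(\mu_1-1)=\binom{\mu_1}{2}$ removed. This gives the key identity
\[
 \sum_{a\geq 0}\langle W_\mu(q;\boldsymbol x),s_{(a\,|\,n-1-a)}\rangle\,v^a = q^{\,\eta(\mu)-\binom{\mu_1}{2}}\prod_{j=1}^{\mu_1-1}\big(v+q^{\,j}\big).
\]

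Substituting this (with $q\to u$) into the reduction, the prefactors combine to $u^{|\tau|-\binom{\mu_1}{2}}$ and the $j=1$ factor $(u+v)$ cancels the denominator, leaving $u^{|\tau|-\binom{\mu_1}{2}}\prod_{j=2}^{\mu_1-1}(u^{\,j}+v)$. It then remains to identify $\mu_1-1$ with $m=\min(\length(\tau),\length(\tau'))$: this is the input that $\mu_1-1=k$ is the number of distinct parts of $\tau$, together with the fact that for a triangular partition this number equals $\min(\length(\tau),\length(\tau'))$ — a structural property of triangular partitions that I would take from \cite{Bergeron_Mazin}. I expect the main obstacle to be the key identity of the previous paragraph: the hook-coefficient formula for $\widetilde H_\mu$ is standard, but one must carry out the top-degree extraction carefully (verifying that exactly the bottom-row cells survive and tracking the resulting power of $q$), and one must match the parameter conventions so that the Whittaker polynomial really is the component in the parameter dual to the leg.
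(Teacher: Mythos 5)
Your proposal is correct and takes essentially the same route the paper intends: the paper's entire justification for this proposition is the one-line remark that it ``follows from \autoref{skew_hook} and \autoref{q_specialization}'', and your argument is exactly that derivation fleshed out --- the generating series $f[u-\varepsilon v]=\sum_a v^a (e_a^\perp f)[u]$ to reach $e_a^\perp \A_\tau$, then \autoref{skew_hook} and \autoref{q_specialization} to reduce to the hook generating function of $W_\mu$, computed via the classical identity $\langle \widetilde{H}_\mu, s_{(a\,|\,\ell)}\rangle = e_\ell[B_\mu-1]$ and top-$t$-degree extraction, yielding $q^{\eta(\mu)-\binom{\mu_1}{2}}\prod_{j=1}^{\mu_1-1}(v+q^j)$ with the $(u+v)$ cancellation. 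Your convention calibrations check out against the paper's own data (for $\tau=321$, $n=6$, one has $\mu=(4,1,1)$, $\eta(\mu)=6$, and the resulting $(u^2+v)(u^3+v)$ matches the hooks of $\A_{321}$), and the identification $\mu_1-1=k=\min(\length(\tau),\length(\tau'))$ is indeed the structural fact about triangular partitions you cite.
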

For instance, the hook terms in $\A_{54321}$ are:
\begin{align*}
 \A_{54321}\big|_{\rm hooks} &= s_{(14\,|\,0)} + s_{(12\,|\,1)} + s_{(11\,|\,1)} + s_{(10\,|\,1)} + s_{(9\,|\,2)} + s_{(9\,|\,1)}  + s_{(8\,|\,2)} + 2 s_{(7\,|\,2)} \\
   &\qquad\qquad + s_{(6\,|\,2)} + s_{(5\,|\,3)} + s_{(5\,|\,2)} +  s_{(4\,|\,3)} + s_{(3\,|\,3)} + s_{(2\,|\,3)} + s_{(0\,|\,4)},
 \end{align*}
and the corresponding encoding polynomial  is:
\begin{align*} (u^2+v) (u^3+v) (u^4+v) &(u^5+v) \\
&= u^{14} + u^{12} v + u^{11} v + u^{10} v + u^{9} v^{2} + u^{9} v + u^{8} v^{2} + 2 u^{7} v^{2}\\
    &\qquad\qquad  + u^{6} v^{2} + u^{5} v^{3} + u^{5} v^{2} + u^{4} v^{3} + u^{3} v^{3} + u^{2} v^{3} + v^{4}.
 \end{align*}
Further calculations suggest that \autoref{Prop_cas_n3} should extend to all cases. Thus we have:
 \begin{property}[Skew version of $\nabla^{-1}$] Letting $m=\min(\length(\tau),\length(\tau'))$, for any triangular partition $\tau$, then
\begin{equation}
 	  e_m^\perp  \E_{\tau+\delta(m)}^{(n)} =  \E_{\tau}^{(n)},
\end{equation}
for all $n\geq \length(\tau)+\maxmult(\tau)$.
 \end{property}
The following more general property holds.
\begin{property}[Delta Property]\label{DeltaProperty}
For a dominant triangular partition $\tau$, with  $n:=\length(\tau)+1$ and such that $\tau-\delta(n)$ is triangular, we have
\begin{equation}\label{DeltaGeneral}
 \Big(e_k^\perp \E_{\tau}^{(n)}\Big)\Big|_{\leq 2}  =\Delta'_{e_{n-k-1}}  \Big( \E_{\tau-\delta(n)}^{(n)}\Big|_{\leq 2} \Big)
\end{equation}
 for any $0\leq k\leq n-1$.
\end{property}
\noindent
When $n=\length(\tau)+1$, \autoref{DeltaGeneral} uniquely characterizes $\E_{\tau}^{(n)}$ for all $\tau$ dominant of length at most $5$.


\section{\bleu{\texorpdfstring{$\nabla$}{N} of hook indexed Schur}}
Among the special instances for which we have a representation theoretic description of $\E_\tau^{(n)}$ (in terms of $\GL_k\times \S_n$-modules), an interesting family corresponds to the case $\nabla(\SchurHat_{(a\,|\,\ell)})$, where we set
\begin{formula}\label{nabla_formula}
   \SchurHat_{(a\,|\,\ell)}(\boldsymbol{x}):= (-1/{qt})^{a-1} s_{(a\,|\,\ell)}(\boldsymbol{x}). 
\end{formula}
For more on this, see~\cite{NablaHook,2003.07402}. The interesting observation is that $\nabla$-image of these renormalize hook indexed Schur functions corresponds to $\varepsilon_{\tau}^{(n)}$. More precisely,
\begin{property}\label{nabla_hook}
If $\delta(n-1) \subseteq \tau \subseteq \delta(n)$, such that $\length(\tau')\preceq  \length(\tau)$ ($\tau'$ is dominant), then
\begin{equation}
   \Big( \E_\tau^{(n)}\Big)\Big|_{\leq 2} =   \nabla(\SchurHat_{(a\,|\,\ell)}),
\end{equation}
with $a=\binom{n+1}{2}-|\tau|$ and $\ell=n-a-1$.
\end{property}
 \Yvcentermath1
For instance, with $n=5$, the relevant partitions are
$$\hbox{\yng(3,2,1)}\subseteq \hbox{\yng(3,2,1,1)}\subseteq \hbox{\yng(3,2,2,1)} \subseteq \hbox{\yng(3,3,2,1)} \subseteq\hbox{\yng(4,3,2,1)}.$$
for which we have the following values 
\begin{tcolorbox}$\begin{aligned}
 \E_{321}^{(5)}&=\nabla(\SchurHat_{(4\,|\,0)})+s_{111}\otimes s_{11111},\\
 \E_{3211}^{(5)}&=\nabla(\SchurHat_{(3\,|\,1)})+s_{111}\otimes s_{2111}+s_{211}\otimes s_{2111},\\
\E_{3221}^{(5)}&=\nabla(\SchurHat_{(2\,|\,2)})+s_{111}\otimes s_{221}+s_{211}\otimes s_{2111}+s_{311}\otimes s_{11111},\\
\E_{3321}^{(5)}&=\nabla(\SchurHat_{(1\,|\,3)})+s_{211}\otimes s_{221}+s_{311}\otimes s_{2111}+s_{411}\otimes s_{11111},\\
\E_{4321}^{(5)}&=\nabla(\SchurHat_{(0\,|\,4)})+(s_{211}+s_{311})\otimes s_{221} +(s_{111}+s_{211}+s_{311}+s_{411})\otimes s_{2111}\\
           &\hskip2.2cm +(s_{311}+s_{411}+s_{511}+s_{1111})\otimes s_{11111}.\\
 \end{aligned}$\end{tcolorbox}
Written in terms of $e_k^\perp \A_\tau$ to make the expressions more compact, these expand as:
\begin{tcolorbox}$\begin{aligned}
\E_{321}^{(5)} &=( s_{6} + s_{31} + s_{41} + s_{111} ) \otimes s_{11111}\\
   &\qquad + ( s_{2} + s_{3} + s_{4} + s_{11} + s_{21} ) \otimes s_{221} + ( s_{1} + s_{2} ) \otimes s_{32} \\  
   &\qquad + (e_1^\perp \A_{321}) \otimes s_{2111}
		 + ( e_2^\perp \A_{321} )\otimes s_{311}
		 + ( e_3^\perp \A_{321} )\otimes s_{41},
\end{aligned}$\end{tcolorbox}

\begin{tcolorbox}$\begin{aligned}
\E_{3211}^{(5)} &=( s_{7} + s_{32} + s_{41} + s_{51} + s_{211} ) \otimes s_{11111}\\
   &\qquad + ( s_{3} + s_{4} + s_{5} + 2 s_{21} + s_{31} ) \otimes s_{221}
		+ ( s_{2} + s_{3} + s_{11} ) \otimes s_{32}\\
   &\qquad + (e_1^\perp \A_{3211}) \otimes s_{2111}
		+ ( e_2^\perp \A_{3211} )\otimes s_{311}
		+ ( e_3^\perp \A_{3211} )\otimes s_{41},
\end{aligned}$\end{tcolorbox}

\begin{tcolorbox}$\begin{aligned}
\E_{3221}^{(5)} &=( s_{8} + s_{42} + s_{51} + s_{61} + s_{311} )  \otimes s_{11111}\\
   &\qquad + ( s_{4} + s_{5} + s_{6} + s_{22} + 2 s_{31} + s_{41} + s_{111} )  \otimes s_{221}
		+ ( s_{3} + s_{4} + s_{21} )\otimes s_{32}\\
   &\qquad + (e_1^\perp \A_{3221}) \otimes s_{2111}
		+ ( e_2^\perp \A_{3221} )\otimes s_{311}
		+ ( e_3^\perp \A_{3221} )\otimes s_{41},
\end{aligned}$\end{tcolorbox}

\begin{tcolorbox}$\begin{aligned}
\E_{3321}^{(5)} &=( s_{9} + s_{33} + s_{52} + s_{61} + s_{71} + s_{411} )  \otimes s_{11111}\\
   &\qquad + ( s_{5} + s_{6} + s_{7} + s_{31} + 2 s_{41} + s_{51} + s_{211} + s_{32} )  \otimes s_{221}\\
   &\qquad + ( s_{4} + s_{5} + s_{21} + s_{31} )\otimes s_{32}\\
   &\qquad + (e_1^\perp \A_{3321}) \otimes s_{2111}
		+ ( e_2^\perp \A_{3321} )\otimes s_{311}
		+ ( e_3^\perp \A_{3321} )\otimes s_{41},
\end{aligned}$\end{tcolorbox}

\begin{tcolorbox}$\begin{aligned}
\E_{4321}^{(5)} &=
( s_{10.} + s_{42} + s_{43} + s_{61} + s_{62} + s_{71} + s_{81} + s_{311} + s_{411} + s_{511} + s_{1111} ) \otimes s_{11111}\\
   &\qquad + ( s_{4} + s_{5} + s_{6} + s_{7} + s_{8} + s_{21} + s_{31} + 2 s_{41} + 2 s_{51} + s_{61}\\
   &\qquad\qquad\qquad  + s_{22} + s_{32} + s_{42} + s_{211} + s_{311} ) \otimes s_{221}\\
   &\qquad + ( s_{2} + s_{3} + s_{4} + s_{5} + s_{6} + s_{21}  + s_{31} + s_{41} + s_{22}) \otimes s_{32}\\
   &\qquad + (e_1^\perp \A_{4321}) \otimes s_{2111}
		+ ( e_2^\perp \A_{4321} )\otimes s_{311}
		+ ( e_3^\perp \A_{4321} )\otimes s_{41}
		+ ( e_4^\perp \A_{4321} )\otimes s_{5}.
\end{aligned}$\end{tcolorbox}
Exploiting  \autoref{DeltaProperty}, \autoref{nabla_hook}, and the relation
  \begin{equation}\label{commutator}
     \Delta'_{e_{n-2}} (e_n) = \sum_{a=1}^{n-1} \nabla(\SchurHat_{(a\,|\,\ell)}),\qquad (\ell=n-1-a)
  \end{equation}
we infer that the above lifts to the following.
\begin{property}
For all $n$,
\begin{equation}
    e_1^\perp \E_{\delta(n)}^{(n)} = \sum_{\delta(n-1)\subseteq \tau\varsubsetneq \delta(n)}  \E_{\tau}^{(n)}.
\end{equation}
\end{property}
 \Yvcentermath0
 
\section{\bleu{Negut formula}}
A partition is said to be {\it triangular} if its cells are exactly the integral points of the convex hull of its diagram, and no cell of it lies in the convex hull its complement.   For all triangular partitions $\tau$ (called ``partition under any line'' in~\cite{AnyLine}) there is an associated $(q,t)$-coefficient symmetric function  which may be calculated by the following formula due to Negut:
$$\mathcal{E}^{(n)}_\tau(q,t;\boldsymbol{x}) =\sum_{\mu\vdash n}
     \sum_{\theta \in \mathrm{SYT}(\mu)} \Omega(\theta) \, \frac{T_\mu^\tau(\theta)}{T_\mu}\widetilde{H}_\mu(q,t;\boldsymbol{x}),$$
 where $n$ is some integer larger or equal to $\ell(\tau)+1$.  Moreover, for a standard tableau $\theta$ of shape $\mu$, we set 
 $$T_\mu^\tau(\theta):=\prod_{(i,j)\in \mu}(q^it^j)^{v_\tau(\theta(i,j))},$$ 
 with  $v_\tau:=(0,\tau_1-\tau_2,\ldots ,\tau_i-\tau_{i+1},\ldots)$, padding it with $0$ parts to make it a length-$n$ vector; and  
\begin{align*}
\Omega(\theta):= &\prod_{\theta(a,b)>\theta(i,j)}\frac{(q^a t^b-q^i t^j)^*
           (q^a t^b-q^{i+1}t^{j+1})^*}{
            (q^a t^b-q^{i+1}t^j)^*
             (q^a t^b-q^i t^{j+1})^*}\\
   &\qquad \times \prod_{\theta(a,b)=\theta(i,j)+1}\frac{q^a t^b}{q^a t^b-q^{i+1} t^{j+1}}
    \prod_{\theta(i,j)\neq 1} \frac{1}{q^i t^j-1}
 \end{align*} 
where $(A)^*$ is equal to $1$ if $A=0$, and $A$ otherwise. When $\tau$ is the staircase partition $(n-1,\ldots,2,1)$, we have $\mathcal{E}_\tau=\nabla(e_n)$, and other special cases include other symmetric functions associated rational and rectangular Catalan combinatorics using the techniques of the elliptic Hall algebra; as well as the effect of $\nabla$ on hook-shape Schur functions. 
\section{\bleu{Universal formula}}
An alternative description of the $e$-basis expansion of ${\E}_{\tau}(\boldsymbol{q}+1,\boldsymbol{x})$ is via the auxiliary expression\footnote{Recall that the plethysm $p_k(\boldsymbol{q}+1)=p_k(\boldsymbol{q})+1$ sends a Schur function $s_\rho$ to the sum    
\begin{equation}
   s_\rho(\boldsymbol{q}+1)=\sum_{\rho/\mu\ {\rm H.S.}} s_\mu(\boldsymbol{q}),
\end{equation} 
where $\mu$ runs over all partitions such that $\rho/\mu$ is an \define{horizontal strip} (H.S.). This is to say that no two cells of the skew shape $\mu/\rho$ lie in the same column. In our framework, we have the inverse plethysm equivalence: $f(\boldsymbol{q})=g(\boldsymbol{q}-1)$  iff $g(\boldsymbol{q})=f(\boldsymbol{q}+1)$.}
\begin{align}
	\F_{\tau}(\boldsymbol{q};\boldsymbol{x})&:=\downarrow\! \E_{\tau}(\boldsymbol{q}+1;\boldsymbol{x}) =\sum_{\lambda} \sum_{\mu} \alpha_{\mu\lambda}\, s_\mu(\boldsymbol{q})  (\downarrow\! e_\lambda(\boldsymbol{x}))\nonumber\\
	   &=\sum_{\mu,\nu} \beta_{\mu\nu}\, s_\mu(\boldsymbol{q})  e_\nu(\boldsymbol{x}),
\end{align}
with $(\downarrow\! e_\lambda)$ denoting the elementary function obtained by removing from $\lambda$ its first part.
We may present $\F_{\tau}$ in tensorial format as:
 \begin{equation}\label{defn_F_tau}
      \F_\tau=  \sum_{\mu,\nu} \beta_{\mu\nu}\, s_\mu \otimes  e_\nu. 
 \end{equation}
The advantage of $\F_\tau$ is that we can uniformly recuperate all $\E_\tau^{(n)}$, via the inverse plethysm
\begin{equation}\label{uniform_E}
   \E_\tau^{(n)}(\boldsymbol{q},\boldsymbol{x}) =  \uparrow^{(n)}\!\F_\tau(\boldsymbol{q}-1,\boldsymbol{x})
\end{equation}
where $\uparrow^{(n)}$ is the linear operator on $\boldsymbol{x}$-variable symmetric polynomials that send $e_\nu=e_\nu(\boldsymbol{x})$ to $(\uparrow^{(n)}\!  e_\nu):=e_{n-|\nu|}\,e_\nu$.
In other words, $(\uparrow^{(n)}\! e_\nu)$ is obtained by adding a part to $\nu$ so that the resulting partition has size $n$. 

For instance, we have
\begin{align}
   &\F_{0} = 1\otimes 1, \\
    &\F_{d} = s_d\otimes 1 + (1+s_1+\,\cdots\, + s_{d-1})\otimes e_1,  \\
  & \F_{1^d} = s_d\otimes 1 + s_{d-1}\otimes e_1 +\,\cdots\, + 1\otimes e_d\\
   &\F_{(a,1)} = (s_{a+1} + s_{(a-1,1)})\otimes 1\nonumber\\
   			&\qquad\qquad +(s_a+(1+s_2+\,\ldots\,+ s_{a-2})\cdot s_1)\otimes e_{1}\nonumber\\
			&\qquad\qquad + s_{a-1}\otimes e_2 +(1+s_1+\,\ldots\,+ s_{a-2})\otimes e_{11}.
\end{align}
Let us illustrate how \autoref{uniform_E} gives the correct description of both the following instances of \autoref{formule _E_ab_3} and \autoref{cas_a1}:
\begin{align}
   \E_{(a,1)}^{(3)} &=  s_{a-2}\otimes s_3+ (s_{a-1}+s_a+ s_{(a-2,1)})\otimes s_{21} + (s_{a+1}+s_{(a-1,1)})\otimes s_{111},\qquad {\rm and} \\
   \E_{(a,1)}^{(4)} &=s_{a-2}\otimes s_{31}+ (s_{a-1}+s_a+ s_{(a-2,1)})\otimes s_{211} + (s_{a+1}+s_{(a-1,1)})\otimes s_{1111}\nonumber\\
       &\qquad + s_{a-1}\otimes s_{22},
\end{align}
as
\begin{align*}
    \uparrow^{(n)}\!\F_{(a,1)}(\boldsymbol{q}-1,\boldsymbol{x}) &=
    (s_{a+1} -s_a-s_{a-1}+s_{a-2}+ s_{(a-1,1)}-s_{(a-2,1)})\otimes e_n\\
    &\qquad +(s_a-s_{a-2}+s_{(a-2,1)})\otimes e_{n-1}e_1\\
    &\qquad + (s_{a-1}-s_{a-2})\otimes e_{n-2}e_2 + s_{a-2}\otimes e_{n-2}e_{11}\displaybreak[0]\\
    &=(s_{a+1} -s_a-s_{a-1}+s_{a-2}+ s_{(a-1,1)}-s_{(a-2,1)})\otimes s_{1^n}\\
    &\qquad +(s_a-s_{a-2}+s_{(a-2,1)})\otimes (s_{21^{n-2}}+s_{1^n})\\
    &\qquad + (s_{a-1}-s_{a-2})\otimes (\rouge{\chi(n>3)}\,s_{221^{n-4}}+s_{21^{n-2}}+s_{1^n})\\
     &\qquad + s_{a-2}\otimes (s_{31^{n-3}} + \rouge{\chi(n>3)}\,s_{221^{n-4}} + 2 s_{21^{n-2}} + s_{1^{n}})
\end{align*}
where $\chi(n>3)=1$ if $n>3$, and  vanishes otherwise. Thus, the disparity between  $\E_{(a,1)}^{(3)}$ and the general value $\E_{(a,1)}^{(n)}$ only arise because of the differences between the product formulas:
\begin{align*}
     &e_1\cdot e_2 = s_{21} +s_{111},\\
     &e_1\cdot e_{11} = s_3+2s_{21} +s_{111},
\end{align*}
and 
\begin{align*}
      &e_n\cdot e_2 =s_{221^{n-4}}+ s_{21^{n-2}}+s_{1^n},\\
       &e_n\cdot e_{11} = s_{31^{n-3}} + s_{221^{n-4}} + 2 s_{21^{n-2}} + s_{1^{n}}
\end{align*}

More explicit values for $\F_\tau$ are as follows:
\begin{tcolorbox}$\begin{aligned}
 &\F_{ 211 }=&&( s_{4} + s_{21} ) \otimes 1\\
       &&&\qquad  + ( s_{2} + s_{3} + s_{11} ) \otimes e_{1}
	+ s_{2} \otimes e_{2}
	+ s_{1} \otimes e_{11}
	+ s_{1} \otimes e_{3}
	+1 \otimes e_{21};
\end{aligned}$\end{tcolorbox}

\begin{tcolorbox}$\begin{aligned}
&\F_{ 32 }=&&( s_{5} + s_{31} ) \otimes 1\\
    &&&\qquad
	+ ( s_{2} + s_{3} + s_{4} + s_{21} ) \otimes e_{1}
	+ ( s_{1} + s_{3} + s_{11} ) \otimes e_{2}
	+ (1 + s_{1} + s_{2} ) \otimes e_{11},\\
&\F_{ 221 }=&&( s_{5} + s_{31} ) \otimes 1\\
    &&&\qquad
	+ ( s_{3} + s_{4} + s_{21} ) \otimes e_{1}
	+ ( s_{1} + s_{3} + s_{11} ) \otimes e_{2}\\
    &&&\qquad
	+ s_{2} \otimes e_{11}
	+ s_{2} \otimes e_{3}
	+ ( 1+ s_{1} ) \otimes e_{21},\\
&\F_{ 2111 }=&&( s_{5} + s_{31} ) \otimes 1\\
    &&&\qquad
	+ ( s_{3} + s_{4} + s_{21} ) \otimes e_{1}
	+ ( s_{3} + s_{11} ) \otimes e_{2}\\
    &&&\qquad
	+ s_{2} \otimes e_{11}
	+ s_{2} \otimes e_{3}
	+ s_{1} \otimes e_{21}
	+ s_{1} \otimes e_{4}
	+ 1 \otimes e_{31};
\end{aligned}$\end{tcolorbox}

\begin{tcolorbox}$\begin{aligned}
& \F_{ 42 }=&&( s_{6} + s_{41} + s_{22} ) \otimes 1\\
    &&&\qquad
	+ ( s_{5} + s_{4} + s_{3} + s_{2} + s_{31} + s_{21} ) \otimes e_{1}\\
    &&&\qquad
	+ ( s_{2} + s_{4} + s_{21} ) \otimes e_{2}
	+ ( 1+ s_{3} + s_{2} + 2 s_{1} + s_{11} ) \otimes e_{11};
\end{aligned}$\end{tcolorbox}

\begin{tcolorbox}$\begin{aligned}
&\F_{ 321 }=&&( s_{6} + s_{31} + s_{41} + s_{111} ) \otimes 1\\
    &&&\qquad
	+ ( s_{3} + s_{4} + s_{5} + s_{11} + s_{21} + s_{31} ) \otimes e_{1}\\
    &&&\qquad
	+ ( s_{2} + s_{4} + s_{11} + s_{21} ) \otimes e_{2}
	+ ( s_{1} + s_{2} + s_{3} ) \otimes e_{11}\\
    &&&\qquad
	+ ( s_{3} + s_{11} ) \otimes e_{3}
	+ ( 2 s_{1} + s_{2} ) \otimes e_{21}
	+ 1 \otimes e_{111};
\end{aligned}$\end{tcolorbox}

\begin{tcolorbox}$\begin{aligned}
&\F_{ 2211 }=&&( s_{6} + s_{41} + s_{22} ) \otimes 1\\
    &&&\qquad
	+ ( s_{5} + s_{4} + s_{31} + s_{21} ) \otimes e_{1}\\
    &&&\qquad
	+ ( s_{4} + s_{2} \rouge{- s_{11}} + s_{21} ) \otimes e_{2}
	+ ( s_{3} + s_{11} ) \otimes e_{11}\\
    &&&\qquad
	+ ( s_{3} + s_{11} ) \otimes e_{3}
	+ ( s_{2} + s_{1} ) \otimes e_{21}
	+ 1 \otimes e_{22}
	+ s_{2} \otimes e_{4}
	+ s_{1} \otimes e_{31}.
\end{aligned}$\end{tcolorbox}

\noindent Observe  that for $\F_{2211}$, a negative term occurs in the coefficient for $e_2$.
 \begin{property} When $\length(\tau')>\length(\tau)$, for all $k\geq 0$ the coefficient of $e_1^k$ in $\F_\tau$ may be calculated from $\A_\tau$ as:
    \begin{equation}
                \langle \F_\tau,f_{1^k}\rangle = e_k^\perp \A_\tau.
    \end{equation}
\end{property}
\noindent For instance, we have
\begin{tcolorbox}[ams align*]
 & \A_{4321}=s_{10.} + s_{42} + s_{43} + s_{61}  + s_{71} + s_{81} + s_{62}+ s_{311} + s_{411} + s_{511} + s_{1111},\\
  &e_1^\perp \A_{4321}= s_{6} + s_{7} + s_{8} + s_{9} + s_{31} + 2 s_{41}  + 2 s_{51}  + 2 s_{61} + s_{71} \\
			&\hskip5cm  + s_{32} + s_{42} + s_{52} + s_{33}+ s_{111} + s_{211} + s_{311} + s_{411},\\  
  &e_2^\perp \A_{4321}= s_{3} + s_{4} + 2 s_{5} + s_{6} + s_{7} + s_{11} + s_{21} + 2 s_{31}  + s_{41} + s_{51}+ s_{32}, \\
   &e_3^\perp \A_{4321}=   s_{1} + s_{2} + s_{3} + s_{4}, \\
   &e_4^\perp \A_{4321}=  1;
\end{tcolorbox}
and then
\begin{small}
\begin{tcolorbox}[ams align*]
\F_{ 4321 }&= \A_{4321} \otimes 1 
					+ (e_1^\perp \A_{4321}) \otimes e_{1}
					+ ( e_2^\perp \A_{4321} ) \otimes e_{11}
					+ ( e_3^\perp \A_{4321} ) \otimes e_{111}
					+(e_4^\perp \A_{4321})  \otimes e_{1111}\\
&\qquad+ ( s_{4} + s_{6} + s_{8} + s_{21} + s_{31} + 2 s_{41} + s_{51} + s_{61}+ s_{22} + s_{42}  + s_{111} + s_{211} + s_{311} ) \otimes e_{2}\\
&\qquad+ ( s_{4} + s_{7} + s_{21} + s_{31} + s_{41} + s_{51}+ s_{32}  + s_{111} + s_{211} ) \otimes e_{3}\\
&\qquad+ ( 2 s_{2} + 2 s_{3} + s_{4} + 2 s_{5} + s_{6} + 2 s_{11} + 3 s_{21}  + 2 s_{31} + s_{41}+ s_{22} ) \otimes e_{21}\\
&\qquad+ ( s_{6} + s_{31} + s_{41} + s_{111} ) \otimes e_{4}\\
&\qquad+ ( 2 s_{3} + s_{4} + s_{5} + 2 s_{11} + s_{21} + s_{31} ) \otimes e_{31}\\
&\qquad+ ( s_{2} + s_{4} + s_{11} + s_{21} ) \otimes e_{22}\\
&\qquad+ ( 3 s_{1} + 2 s_{2} + s_{3} ) \otimes e_{211}
\end{tcolorbox}
\end{small}
\begin{properties}
For all triangular partitions $\tau$, we have
\begin{enumerate}
\itemsep=5pt
  \item $\A_\tau(\boldsymbol{q}+1)=\sum_{\nu} \langle \F_\tau, f_\nu\rangle$.
  \item For all $k\geq 0$, we have
      \begin{equation}
		\sum_{\length(\nu)=k} \langle \F_{\rouge{\tau}}, f_\nu\rangle=\sum_{\length(\nu)=k} \langle \F_{\rouge{\tau'}}, f_\nu\rangle.
       \end{equation}
   \item For all $n> \length(\tau)$,
   		\begin{equation}
		(\uparrow^{(n)}\!\F_{\tau})(q;\boldsymbol{x})=
			\sum_{\alpha\subseteq \tau} q^{\area(\alpha)}\, s_{(\alpha+1^n)/\alpha}(\boldsymbol{x}).
		\end{equation}
  \item For correctly defined LLT-polynomials $\mathbb{L}_\alpha(t;\boldsymbol{x})$, and $n> \length(\tau)$, we have
     		\begin{equation}
		(\uparrow^{(n)}\!\F_{\tau})(q;\boldsymbol{x})=
			\sum_{\alpha\subseteq \tau} q^{\area(\alpha)}\, \mathbb{L}_\alpha(t;\boldsymbol{x}).
		\end{equation}
 \item For any triangular partitions $\tau_1\subseteq \tau_2$, with $|\tau_2|-|\tau|=1$, then
             		\begin{equation}
		(e_1^\perp \F_{\tau_2})\geq  \F_{\tau_1} \qquad \hbox{(the difference is $e$-positive)}
		\end{equation}
\end{enumerate} 
\end{properties}
We surmise that, for all triangular partition $\tau$, there exists a Tamari-like poset $\mathcal{T}_\tau$ on the sets of partitions $\alpha$ contained in $\tau$, affording a decorated chains enumerating polynomial $\mathcal{T}_\tau(k;\boldsymbol{x})$ in the variable $k$. Here $k$ stands for the length of the chains, with chains ending at $\alpha$ decorated (weighted) by the elementary symmetric function $s_{(\alpha+1^n)/\alpha}$ (or even the LLT-polynomial $\mathbb{L}_\alpha(t;\boldsymbol{x})$). In formula:
\begin{equation}
    \mathcal{T}_\tau(k;\boldsymbol{x}) = \sum_{\length(\gamma)=k} s_{({\rm top}(\gamma)+1^n)/{\rm top}(\gamma)},
\end{equation}
where the sum runs over the set of chains $\gamma$ having length $k$. This $\mathcal{T}_\tau$ is expected to be such that
\begin{property}
    For all triangular partition $\tau$, the difference $\mathcal{T}_\tau(k;\boldsymbol{x})-(\uparrow^{(n)}\!\F_{\tau})(k;\boldsymbol{x})$ is $e$-positive. 
\end{property}
Here, evaluation at $k$ of $(\uparrow^{(n)}\!\F_{\tau})$ corresponds to setting the first $k$ variables $q_i$ equal to $1$, and all remaining ones equal to $0$. For example, we have
\begin{small}
\begin{tcolorbox}[ams align*]
   (\uparrow^{(n)}\!\F_{321})(k;\boldsymbol{x}) &= ({1}/{720})  (k^{6} + 39 k^{5} + 295 k^{4} + 645 k^{3} - 296 k^2 + 36k)\, e_n(\boldsymbol{x})\\ 
  &\qquad  + ({1}/{120}) (k^2 + 3k)  (k^{3} + 27 k^{2} + 74 k - 12)\, e_{(n-1,1)}(\boldsymbol{x}) \\ 
  &\qquad  + ({1}/{24})  (k^{4} + 14 k^{3} + 35 k^2 - 2k) \,  e_{(n-2,2)}(\boldsymbol{x})\\ 
  &\qquad   + ({1}/{6}) (k^{3} + 6 k^2 - k)\, e_{(n-3,3)}(\boldsymbol{x}) \\ 
  &\qquad  + ({1}/{6})(k^{3} + 6 k^2 + 11k)\,   e_{(n-2,11)}(\boldsymbol{x}) \\ 
  &\qquad  +  ({1}/{2}) (k^2 + 5k)\, e_{21} + e_{(n-3,111)}(\boldsymbol{x}).
\end{tcolorbox}
\end{small}


\section{\bleu{Whish list}} 
We would like to have an expansion of $\F_\tau$ of the form
\begin{equation}
  \F_\tau=\sum_{\alpha\subseteq \tau} \mathcal{C}_\alpha \otimes s_{(\alpha+1^{\length(\alpha)})/\alpha},
\end{equation}
with coefficients $\mathcal{C}_\alpha = \mathcal{C}^\tau_\alpha$,
such that
\begin{enumerate}
\item $\mathcal{C}_0 =\A_\tau$,
\item If $\langle \mathcal{C}_\alpha ,s_{\mu}\rangle \neq 0$, then $\sum_{i}i\mu_i \leq  |\tau|-|\alpha|$.
\item For all $\alpha\subseteq \tau$, we have $\langle \mathcal{C}_\alpha ,s_{n}\rangle =1$, where  $n=|\tau|-|\alpha|$,
\item and in some instances $\mathcal{C}^\tau_\alpha=\mathcal{C}^{\tau'}_{\alpha'}$.
\end{enumerate}
 To simplify the presentation of further expressions, let consider 
\begin{align*}
	\F_\tau^{(1)}&:=\A_\tau\otimes 1+ \sum_{0\varsubsetneq \alpha\subseteq \tau} \mathcal{C}^{(1)}_\alpha \otimes s_{(\alpha+1^{\length(\alpha)})/\alpha},
					\qquad {\rm where} \\ 
	\mathcal{C}^{(1)}_\alpha&:= s_n,\quad {\rm with}\ n=|\tau|-|\alpha|. 
\end{align*}
In other words, this is the contribution that corresponds to rules (1) and (2). It does not seem to be possible for the third condition to hold in general. However, it does in the following cases.
\begin{itemize}
\item[$\bullet$] For $\tau=1^k$ and $k$, we have $\F_\tau=\F_\tau^{(1)}$.
\item[$\bullet$] For $\tau=21^k$, we have 
	\begin{equation}
	    \F_\tau=\F_\tau^{(1)}+ \sum_{j=1}^{k-1} s_{(j,1)}\otimes e_{\rho(1^{k-j})}.
	\end{equation}		
\item[$\bullet$] Equivalently, assuming condition (3),  for $\tau =(k+1,1)$ we have 			
	\begin{equation}
	    \F_\tau=\F_\tau^{(1)}+ \sum_{j=1}^{k-1} s_{(j,1)}\otimes e_{\rho({k-j})}.
	\end{equation}			

\item[$\bullet$] For $\tau=321$, we have 
	\begin{align}
	    \F_\tau&=\F_\tau^{(1)}+ s_{31}\otimes e_{\rho({1})} +s_{21}\otimes (e_{\rho({11})}+ e_{\rho({2})})\nonumber \\
	    &\qquad\quad +s_{11}\otimes (e_{\rho({111})} +  e_{\rho({3})} +  e_{\rho({22})}).
	\end{align}			
\item[$\bullet$] For $\tau=421$, we have 
	\begin{align}
	    \F_\tau&=\F_\tau^{(1)}+ (s_{22}+s_{31}+s_{41}+s_{111})\otimes e_{\rho({1})} 
	    			+s_{31}\otimes (e_{\rho({11})}+ e_{\rho({2})})
	    \nonumber \\
	    	    &\qquad\quad +s_{21}\otimes (e_{\rho({111})} + e_{\rho({3})} 
		     			  +  e_{\rho({21})}+  e_{\rho({22})})\nonumber  \\
		    &\qquad\quad + s_{11} \otimes (e_{\rho({211})} +  e_{\rho({31})} +  e_{\rho({4})}).
	\end{align}			
 \end{itemize}

 \nocite{*}

\bibliographystyle{amsplain-ac}
\bibliography{TriangularDiagonalHarmonicsConjecture}

\end{document}